\newcommand{\shrinkmargins}[1]{
  \addtolength{\textheight}{#1\topmargin}
  \addtolength{\textheight}{#1\topmargin}
  \addtolength{\textwidth}{#1\oddsidemargin}
  \addtolength{\textwidth}{#1\evensidemargin}
  \addtolength{\topmargin}{-#1\topmargin}
  \addtolength{\oddsidemargin}{-#1\oddsidemargin}
  \addtolength{\evensidemargin}{-#1\evensidemargin}
  }
\newtheorem{theorem}{Theorem}
\newtheorem{lemma}[theorem]{Lemma}
\newtheorem{corollary}[theorem]{Corollary}
\newtheorem*{theorem*}{Theorem}
\theoremstyle{definition}
\theoremstyle{remark}
\newtheorem{Challenge}{Challenge}
\numberwithin{theorem}{section} \numberwithin{equation}{section}
\def\nequiv{\mathrel{\rlap/\kern-.34em\equiv}}%
\def\func#1{\mathop{\rm #1}}%
\begin{document}
\title[Log-concavity]{Log-Concavity of Infinite Product and Infinite Sum Generating Functions}
\author{Bernhard Heim }
\address{Faculty of Mathematical and Natural Sciences, Mathematical Institute, University of Cologne, Weyertal 86--90, 50931 Cologne, Germany}
\email{bheim@math.uni-koeln.de}
\address{Lehrstuhl A f\"{u}r Mathematik, RWTH Aachen University, 52056 Aachen, Germany}
\email{bernhard.heim@rwth-aachen.de}
\author{Markus Neuhauser}
\address{Kutaisi International University, 5/7, Youth Avenue,  Kutaisi, 4600 Georgia}
\address{Lehrstuhl A f\"{u}r Mathematik, RWTH Aachen University, 52056 Aachen, Germany}
\email{markus.neuhauser@kiu.edu.ge}
\subjclass[2020] {Primary 05A17, 11P82; Secondary 05A20}
\keywords{Generating functions, Log-concavity, Partition numbers.}
%%\linenumbers
%%
%%
%%
\begin{abstract}
We
expand on the remark
by Andrews on the importance of infinite sums and products in combinatorics.
Let $\{g_d(n)\}_{d\geq 0,n \geq 1}$ be the double sequences $\sigma_d(n)= \sum_{\ell \mid
n} \ell^d$ or
$\psi_d(n)= n^d$. We
associate double sequences 
$\left\{ p^{g_{d}
}\left( n\right) \right\}$ and $\left\{ q^{g_{d}
}\left( n\right) \right\} $, defined as the coefficients of
\begin{eqnarray}
\sum_{n=0}^{\infty} p^{g_{d}
}\left( n\right) \,
t^{n} & := &  
\prod_{n=1}^{\infty} \left( 1 - t^{n} \right)^{-\frac{ \sum_{\ell \mid
n} \mu(\ell) \, g_d(n/\ell)   }{n} }, \label{ex}\\
\sum_{n=0}^{\infty}
q^{g_{d}
}\left( n\right) \, t^{n} & := & 
\frac{1}{1 - \sum_{n=1}^{\infty} g_d(n) \,
t^{n}
}. \label{ge}
\end{eqnarray}
These coefficients are related to the number of partitions $\mathrm{p}\left( n\right) = p^{\sigma _{1
}}\left ( n\right) $, plane partitions $\func{pp}\left( n\right) = p^{\sigma _{2
}}\left( n\right) $ of $n$, and Fibonacci numbers $F_{2n} =
q^{\psi _{1
}}\left( n\right) $. Let $n \geq 3$
and let $n \equiv 0 \pmod{3}$. Then the coefficients are log-concave at $n$ for almost all $d$ in the exponential (\ref{ex}) and geometric (\ref{ge}) cases. The coefficients are not log-concave for almost all $d$ in both cases, if $n \equiv 2 \pmod{3}$. Let $n\equiv 1 \pmod{3}$. Then the log-concave property flips for almost all $d$.
\end{abstract}
%%
%%
%%
%%%%%%%%%%%%%%%%%%%%%%%%%%%%%%%%%%%%%%%%%%%%%%%%%%%%%%%%%%%%%%%%%%%%%%%%%%%%%%%%%%%%%%%%%%%%%%%%%%%%%%%%%%
%%%%%%%%%%%%%%%%%%%%%%%%%%%%%%%%%%%%%%%%%%%%%%%%%%%%%%%%%%%%%%%%%%%%%%%%%%%%%%%%%%%%%%%%% Section 1
%%%%%%%%%%%%%%%%%%%%%%%%%%%%%%%%%%%%%%%%%%%%%%%%%%%%%%%%%%%%%%%%%%%%%%%%%%%%%%%%%%%%%%%%%%%%%%%%%%%%%%%%%%
%%%%%%%%%%%%%%%%%%%%%%%%%%%%%%%%%%%%%%%%%%%%%%%%%%%%%%%%%%%%%%%%%%%%%%%%%%%%%%%%%%%%%%%%%%%%%%%%%%%%%%%%%%
\maketitle
\newpage
\section{Introduction}
In this paper, we study
log-concave properties of families of sequences
related to infinite product and infinite sum generating functions \cite{Br89,St89,HN22a}.

Log-concavity is an important property.
For polynomials with positive coefficients, real-rootedness
entails log-concavity of all internal coefficients, which implies unimodality. 
Recent breakthrough works by Huh and his collaboraters, using methods in algebraic geometry, have proven the Mason and Heron--Rota--Welsh conjecture on the log-concavity of the chromatic polynomials of graphs, and finally the characteristic polynomials of matroids \cite{AHK18, BHMPW22, Hu12}. We refer
to the survey
by Kalai \cite{Ka22} on the work
by Huh. 
Note that
Zhang \cite{Zh22} proved that the coefficients of the Nekrasov--Okounkov polynomials are almost all unimodal, building on the work
by Odlyzko and 
Richmond \cite{OR85} and Hong and Zhang \cite{HZ21}.

We offer an approach for sequences associated with
generating functions, where in general, not all coefficients are log-concave. For example, it is well-known that the partition numbers $\mathrm{p}\left( n\right) $
are log-concave for $n \geq 26$.
We encounter $\mathrm{p}\left( n\right) $ and the number of plane partition numbers $\func{pp}\left( n\right) $ of $n$ \cite{An98, Kr16} and Fibonacci numbers $F_n$.
A sequence $\{a_n\}_n$ is
called log-concave at $n_{0}$, if
\begin{equation*} 
a_{n_0}^2 - a_{{n_0}-1} \, a_{{n_0}+1} \geq 0.
\end{equation*}

Let
$\{g_d(n)\}_{d\geq 0,n \geq 1}$ be a double sequence of positive integers.
We
examine the coefficients of the
associated generating functions of exponential (\ref{expo}) and geometric type (\ref{geom}):
\begin{eqnarray}
\sum_{n=0}^{\infty} p^{g_{d
}}\left( n\right) \,
t^{n} & := &  
 \exp    \left( \sum_{n=1}^{\infty} g_d(n) \, \frac{
t^{n}}{n} \right) = 
\prod_{n=1}^{\infty} \left( 1 - t
^{n} \right)^{-\frac{\alpha_d(n)}{n}},\label{expo} \\
\sum_{n=0}^{\infty}
q^{g_{d
}}\left( n\right) \,
t^{n} & := & 
\frac{1}{1 - \sum_{n=1}^{\infty} g_{d}\left( n\right) \,
t^{
n}}. \label{geom}
\end{eqnarray}
Here $\alpha_d(n) = \sum_{\ell \mid
n} \mu(\ell) \, g_d(n\, / \, \ell)$, where $\mu$ is the M\"{o}bius function.

The approach offered in this paper, is
incited by
Andrews' remark
(\cite{An98}, chapter 6, page 99) in the context of Meinardus'
theorem:
``Unfortunately not much is known about problems when a series rather than a product is involved".
We call
$n$ an exception related to a sequence $\left\{ a_{n}\right\} _{n}$, %$g_d$ in the exponential case
if
$$%\left(p^{g_{d}
%}\left(
a_{n}%\right) \right)
^2 - %p^{g_{d
%}}\left(
a_{n-1}%\right)
\,\, %p^{g_{d}
%}\left(
a_{n+1}%\right)
<0.$$
The set %s
of all exceptions %for fixed $d$ are
is denoted by 
$
{E}^{
%p^{g_{d}}
a}$.
%Similar we define pairs of  exceptions in the geometric case.

To this point only the exponential cases have
been studied in the literature. 
Let $g_d = \sigma _{d}
$. For fixed $d=1$, we have the number of partitions 
$\mathrm{p}\left( n\right) =p^{\sigma_1
}\left( n\right) $.

Nicolas \cite{Ni78} proved in 1978, that the partition function
$\mathrm{p}\left( n\right) $ is log-concave, if and only if
$n$ is not an element of the finite set 
$$
{E}^{
p^{\sigma _{1}}
}= \{
2k+1
\, \vert \, 0 \leq k \leq 12\}.$$
This was
proved again by DeSalvo and Pak \cite{DP15}. Both proofs utilize the
Rademacher formula for $\mathrm{p}\left( n\right) $.
In \cite{HNT22}, we have proven that the plane partition function 
$\func{pp}\left( n\right) = p^{\sigma_2
}\left( n\right) $ is log-concave for almost all $n$. Finally, based on numerical experiments, we conjectured that 
$$
{E}^{
p^{\sigma _{2
}}}= \{
2k+1
\, \vert \, 0 \leq k \leq 5\}.$$
Recently, the conjecture was proven by Ono, Pujahari, and Rolen \cite{OPR22}.

In this paper, we study the similarities between log-concavity properties 
of the coefficients obtained by the %exponential (\ref{expo}) and geometric
generating
function of exponential (\ref{expo}) and geometric
type (\ref{geom}).
\subsection{Landscape of Exceptions in the
Exponential Cases}
We consider log-concavity for $\left\{ p
^{g_{d}}\left( n\right) \right\} $. We recall the results obtained in
\cite{HN22a} and \cite{HN22b}. Note, the information on $d=0$
is new.
Numerical investigations indicate that 
\begin{equation*}\label{sigma0}
E^{p^{\sigma_0}}
=\left\{ 2k+1\middle| 0\leq k\leq 371\right\} \setminus
\left\{ 717,723,729,735,741\right\} 
\end{equation*}
tested up
to $n=25
00$. Further, for $0
\leq d \leq 5$ the cardinality of
$
{E}^{p^{\sigma _{d}
}}$ seems to be decreasing: $367 > 13 > 6 > 4> 2\geq 2$. But
$\left\vert
{E}^{p^{\sigma _{6}
}}\right\vert \geq 3 $. 
We refer to Table \ref{landscape1}. The case $g_d= \psi_d
$, if we see Table \ref{clean}, reveals
the similar pattern.

\begin{table}[H]                                  \[                                                
\begin{array}{r|cccccccccccccccccc}             \hline               
n\backslash d&0&{1}& {2}&3&4&5&6&7&8&9&10&11&12&13&14&15&16&17\\ \hline \hline  %%
1&\bullet&\bullet&\bullet&\bullet&\bullet&\bullet&\bullet&\bullet&\bullet&\bullet&\bullet&\bullet&\bullet&\bullet&\bullet&\bullet&\bullet&\bullet\\
2&&&&&&&&&&&&&&&&&&\\
3&\bullet&\bullet&\bullet&\bullet&&&&&&&&&&&&&&\\
4&&&&&&&\bullet&\bullet&\bullet&\bullet&\bullet&\bullet&\bullet&\bullet&\bullet&\bullet&\bullet&\bullet\\
5&\bullet&\bullet&\bullet&\bullet&\bullet&\bullet&\bullet&\bullet&\bullet&\bullet&&&&&&&&\\
6&&&&&&&&&&&&&&&&&&\\
7&\bullet&\bullet&\bullet&\bullet&&&&&&&&\bullet&\bullet&\bullet&\bullet&\bullet&\bullet&\bullet\\
8&&&&&&&&&&\bullet&\bullet&\bullet&\bullet&\bullet&\bullet&\bullet&\bullet&\bullet\\
9&\bullet&\bullet&\bullet&&&&&&&&&&&&&&&\\
10&&&&&&&&&&&&&&&&&\bullet&\bullet\\
11&\bullet&\bullet&\bullet&&&&&&&&&&\bullet&\bullet&\bullet&\bullet&\bullet&\bullet\\
12&&&&&&&&&&&&&&&&&&\\
13&\bullet&\bullet&&&&&&&&&&&&&&&&\\
14&&&&&&&&&&&&&&&&\bullet&\bullet&\bullet\\ \hline
\end{array}
\]
\caption{\label{landscape1}Exceptions for $p^{\sigma_d
}\left( n\right) $
for $0
\leq d
\leq 17
$  and $1
\leq n
\leq 14$.}
\end{table}
%%%%%%%%%%%
\begin{table}[H]
\[
\begin{array}{r|cccccccccccccccccccc}
\hline
n\backslash d&0&{1}&{2}&3&4&5&6&7&8&9&10&11&12&13&14&15&16&17\\ \hline \hline
1&&\bullet&\bullet&\bullet&\bullet&\bullet&\bullet&\bullet&\bullet&\bullet&\bullet&\bullet&\bullet&\bullet&\bullet&\bullet&\bullet&\bullet\\
2&&&&&&&&&&&&&&&&&&\\
3&&&&&&&&&&&&&&&&&&\\
4&&&&&&\bullet&\bullet&\bullet&\bullet&\bullet&\bullet&\bullet&\bullet&\bullet&\bullet&\bullet&\bullet&\bullet\\
5&&&&&&&&&&&&&&&&&&\\
6&&&&&&&&&&&&&&&&&&\\
7&&&&&&&&&&&&\bullet&\bullet&\bullet&\bullet&\bullet&\bullet&\bullet\\
8&&&&&&&&&&\bullet&\bullet&\bullet&\bullet&\bullet&\bullet&\bullet&\bullet&\bullet\\
9&&&&&&&&&&&&&&&&&&\\
10&&&&&&&&&&&&&&&&&\bullet&\bullet\\
11&&&&&&&&&&&&&\bullet&\bullet&\bullet&\bullet&\bullet&\bullet\\
12&&&&&&&&&&&&&&&&&&\\
13&&&&&&&&&&&&&&&&&&\\
14&&&&&&&&&&&&&&&&\bullet&\bullet&\bullet\\
\hline
\end{array}
\]
\caption{\label{clean}
Exceptions for $p^{\psi_{d
}}\left( n\right) $
for $0
\leq d
\leq 17
$ and             $
1\leq n
\leq 14
$.}
\end{table}
Now, fixing $n$ and studying log-concavity, reveals
a new phenomenon. Let
$n \geq 3$. Let $g_d(n)= \sigma_d(n)$
or $\psi_d(n)$. Then the set of all
exceptions for all $d$
is finite, if and only if $n \equiv 0 \pmod{3}$. More generally \cite{HN22b}, let
$\{g_d(n)\}_{d\geq 1,n \geq 1}$ be positive real numbers satisfying $g_d(1)=1$ and 
\begin{equation*}
0 \leq g_{d}\left( n\right) -n^{d} \leq 
\left( g_{1
} \left( n\right) -
n
\right)
\,  \left( n-1\right) ^{d-1}.
\label{eq:allgemein}
\end{equation*}
Let $n \geq 3$.
Then for almost all $d$, $p^{g_{d
}}\left( n\right) $ is log-concave
at $n$, if and only if $n$ is
divisible by~$3$. Moreover, explicit bounds are given.
It would be interesting to examine the results of this paper in the context of
generalized Laguerre-P\'olya functions and Jensen polynomials \cite{Wa22}.

\subsection{Landscape of Exceptions in the Geometric Cases}
At first glance, the {\it geometric} case, see Table \ref{landscapesigma} and Table \ref{landscape},
seems not to reveal much structure. Nevertheless, we recall that 
$q
^{\psi _{1
}}(n)= F_{2n}$
can be identified with the sequence of the $2n$th Fibonacci numbers, which is log-concave for $n>1$.
This follows from the fact that  $
q^{\psi_{1
}}(n)= U_{n-1}(\frac{3}{2})$, where $U_n(x)$ is the $n$th
Chebyshev polynomial of the second kind. Thus, we have some kind of analogue
to Nicolas' result.
Thus far, for $d=2$ and $\psi_2(n)$, we expect infinitely many exceptions.
Nevertheless, by fixing $n\geq 3$ we obtain the following new result.

We have the {\it geometric} cases for $g_d(n)= \sigma_d(n)$
in Table \ref{landscapesigma}
and $g_d(n)=n^d$ in Table~\ref{landscape}.
\begin{table}[H]                                  \[                                                \begin{array}{r|cccccccccccccccccc}
\hline
n\backslash d&0&
{1}& {2}&3&4&5&6&7&8&9&10&11&12&13&14&15&16&17\\ \hline \hline
1&\bullet&\bullet&\bullet&\bullet&\bullet&\bullet&\bullet&\bullet&\bullet&\bullet&\bullet&\bullet&\bullet&\bullet&\bullet&\bullet&\bullet&\bullet\\
2&&&&&&&&&&&&&&&&&&\\
3&\bullet&\bullet&\bullet&\bullet&\bullet&&&&&&&&&&&&&\\
4&&&&&&\bullet&\bullet&\bullet&\bullet&\bullet&\bullet&\bullet&\bullet&\bullet&\bullet&\bullet&\bullet&\bullet\\
5&\bullet&\bullet&\bullet&\bullet&\bullet&\bullet&&&&&&&&&&&&\\
6&&&&&&&&&&&&&&&&&&\\
7&\bullet&\bullet&\bullet&\bullet&\bullet&\bullet&\bullet&\bullet&\bullet&\bullet&\bullet&\bullet&\bullet&\bullet&\bullet&\bullet&\bullet&\bullet\\
8&&&&&&&&&&&&&&&&&&\\
9&\bullet&\bullet&\bullet&\bullet&\bullet&&&&&&&&&&&&&\\
10&&&&&&\bullet&\bullet&\bullet&\bullet&\bullet&\bullet&\bullet&\bullet&\bullet&\bullet&\bullet&\bullet&\bullet\\
11&\bullet&\bullet&\bullet&\bullet&\bullet&\bullet&\bullet&&&&&&&&&&&\\
12&&&&&&&&&&&&&&&&&&\\
13&\bullet&\bullet&\bullet&\bullet&\bullet&\bullet&\bullet&\bullet&\bullet&\bullet&\bullet&\bullet&\bullet&\bullet&\bullet&\bullet&\bullet&\bullet\\
14&&&&&&\bullet&\bullet&&&&&&&&&&&\\ \hline
\end{array}
\]
\caption{\label{landscapesigma}Exceptions for $
q^{\sigma_{d
}}\left( n\right) $
for $0\leq d\leq 17$
and $1\leq n\leq 14$.}
\end{table}
%%%%%%%%%%%%%%%%%%%%%%%%%%%%%%%%%%%%%%%%%%%
%%%%%%%%%%%%%%%%%%%%%%%%%% Q  n^d kommt nun.
\begin{table}[H]                                  
\[                                                
\begin{array}{r|cccccccccccccccccccc}             \hline     n\backslash d&0&{1}& {2}&3&4&5&6&7&8&9&10&11&12&13&14&15&16&17\\ \hline \hline
1&\bullet&\bullet&\bullet&\bullet&\bullet&\bullet&\bullet&\bullet&\bullet&\bullet&\bullet&\bullet&\bullet&\bullet&\bullet&\bullet&\bullet&\bullet\\
2&&&&&&&&&&&&&&&&&&\\
3&&&&&&&&&&&&&&&&&&\\
4&&&\bullet&\bullet&\bullet&\bullet&\bullet&\bullet&\bullet&\bullet&\bullet&\bullet&\bullet&\bullet&\bullet&\bullet&\bullet&\bullet\\
5&&&\bullet&\bullet&&&&&&&&&&&&&&\\
6&&&\bullet&&&&&&&&&&&&&&&\\
7&&&&&\bullet&\bullet&\bullet&\bullet&\bullet&\bullet&\bullet&\bullet&\bullet&\bullet&\bullet&\bullet&\bullet&\bullet\\
8&&&&\bullet&\bullet&\bullet&&&&&&&&&&&&\\
9&&&&\bullet&&&&&&&&&&&&&&\\
10&&&\bullet&&&\bullet&\bullet&\bullet&\bullet&\bullet&\bullet&\bullet&\bullet&\bullet&\bullet&\bullet&\bullet&\bullet\\
11&&&\bullet&&\bullet&\bullet&\bullet&&&&&&&&&&&\\
12&&&\bullet&\bullet&&&&&&&&&&&&&&\\
13&&&&\bullet&&\bullet&\bullet&\bullet&\bullet&\bullet&\bullet&\bullet&\bullet&\bullet&\bullet&\bullet&\bullet&\bullet\\
14&&&&&\bullet&\bullet&\bullet&&&&&&&&&&&\\ \hline
\end{array}
\]
\caption{\label{landscape}Exceptions for $
q^{\psi_{d
}}\left( n\right) $ for $0\leq d\leq 17$ and $1\leq n\leq 14$.}
\end{table}
\newpage
\subsection{Main
Results}
In this paper, we prove the following:
\begin{theorem}
\label{hauptsatzneu}
Let $\{g_d(n)\}_{d\geq 0,n\geq 1}$ be a double sequence of positive real numbers with $g_d(1)=1$ for all $d$
and
\begin{equation}\label{Bedingung}
0 \leq g_d(n) - n^d \leq \left( g_{
0}\left( n\right) -
1\right) \, (n-1)^{d
}.
\end{equation}
Suppose there is an
$0<r
\leq 1
$, such that $q
^{g_{0}
}\left( n\right) \leq r^{-n}$. Let $n\geq 3$ and $D^{g}\left( n\right) $ be defined
by
\begin{eqnarray}
D^{g}\left( n\right)
&:=
&-
2
\log
_{9/8}\left( r\right)
\,\, n,\label{eq:0mod3}\\
D^{g}\left( n\right)
&:=&
\log _{9/8}\left( 3\right)
-2n\log
_{9/8}\left( r\right)
-\log
_{9/8}\left(
n+2
\right)
\label{eq:1mod3}
\end{eqnarray}
for $n\equiv 0\pmod{3}$ in (\ref{eq:0mod3}) and
$n \equiv 1 \pmod{3}$ in (\ref{eq:1mod3}). 
Further, let $n \equiv 2 \pmod{3}$ and $n \neq 5$.
Then $D^{g}\left( n\right) $ is defined
as
\begin{equation*}
\log
_{9/8}\left( 2\right) -
\left( n+1\right)
\log
_{9/8}\left( r\right)
.
\end{equation*}
Moreover, $D^{g}\left( 5\right) :=
\log _{9/8}\left(
2q^{g_{0}
}\left( 4\right) q
^{g_{0}
}\left( 6\right)
\right)
$.

Let $d >
D^{g}\left( n\right) $.
Then
\begin{equation}
\frac{ \left(
q^{g_{d
}}\left( n\right) \right)^2}{
q^{g_{d
}}\left( n-1\right ) \,\, 
q^{g_{d
}}\left( n+1\right) }
< 1 \text{, if and only if } n \equiv 1 \pmod{3}.
\end{equation}
\end{theorem}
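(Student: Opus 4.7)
The strategy is to expand $q^{g_d}(n)$ as a sum over compositions of $n$ and track the asymptotic behaviour as $d\to\infty$. From the geometric series expansion of (\ref{geom}) one reads off
$$q^{g_d}(n) = \sum_{(k_1,\ldots,k_m)\vDash n}\prod_{i=1}^m g_d(k_i).$$
The hypothesis (\ref{Bedingung}) implies the two-sided estimate $k^d\leq g_d(k)\leq g_0(k)\,k^d$ for every $k\geq 1$, which squeezes each summand between $(\prod k_i)^d$ and $(\prod k_i)^d\prod g_0(k_i)$. Hence as $d\to\infty$ the dominant compositions are those maximising the integer product $\prod k_i$.

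A standard optimisation (parts as close to $e$ as possible) yields the maximal product
$$M(n) = \begin{cases} 3^k, & n=3k,\\ 4\cdot 3^{k-1}, & n=3k+1,\ k\geq 1,\\ 2\cdot 3^k, & n=3k+2,\end{cases}$$
and a direct computation gives the critical ratio
$$\frac{M(n)^2}{M(n-1)\,M(n+1)} = \begin{cases} 9/8, & n\equiv 0\pmod 3,\\ 8/9, & n\equiv 1\pmod 3,\\ 1, & n\equiv 2\pmod 3.\end{cases}$$
This already explains the trichotomy in the theorem: for $n\equiv 0,1\pmod 3$ the sign of $q^{g_d}(n)^2-q^{g_d}(n-1)q^{g_d}(n+1)$ is decided at leading exponential order by the factor $(9/8)^{\pm d}$, while for $n\equiv 2$ the leading exponentials cancel exactly and the sign must be extracted from a subleading combinatorial comparison of $|S_n|^2$ with $|S_{n-1}|\,|S_{n+1}|$, where $S_n$ denotes the set of compositions realising $M(n)$. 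One checks $|S_n|^2>|S_{n-1}|\,|S_{n+1}|$ throughout $n\equiv 2\pmod 3$ with $n\neq 5$, which yields log-concavity in that residue class.

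The remaining work is to bound the errors, both from non-maximal compositions and from the slack $g_d(k)-k^d$. The former admits a clean estimate
$$\sum_{\mathbf{k}\notin S_n}\prod g_d(k_i) \;\leq\; (\rho M(n))^d\sum_{\mathbf{k}\vDash n}\prod g_0(k_i) \;=\; (\rho M(n))^d\,q^{g_0}(n) \;\leq\; (\rho M(n))^d\,r^{-n},$$
with $\rho<1$ the ratio of the second-largest composition product to $M(n)$; a parallel $((k-1)/k)^d$-geometric bound handles the slack on maximal compositions. Demanding that the leading term $|S_n|\,M(n)^d$ overwhelm these errors---by the full factor $(9/8)^d$ when $n\equiv 0,1\pmod 3$, and by the much weaker subleading margin when $n\equiv 2\pmod 3$---produces precisely the three thresholds $D^g(n)$, the base $9/8$ in the logarithm being the universal ``gap'' visible in the table of ratios. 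The hardest part will be the case $n\equiv 2\pmod 3$, where the leading exponentials coincide and the sign hinges on a much smaller correction; this is exactly why $D^g(n)$ must blow up like $-(n+1)\log_{9/8}(r)$ there. The outlier $n=5$ corresponds to $k=1$, for which the generic $k\geq 2$ estimates of $|S_n|$ and of the second-largest product ratio are too coarse; it is therefore treated by the direct numerical bound $\log_{9/8}(2\,q^{g_0}(4)\,q^{g_0}(6))$ built into the theorem.
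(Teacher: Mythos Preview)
Your plan is correct and mirrors the paper's proof almost step for step: the same composition expansion of $q^{g_d}(n)$, the same maximal-product lemma (with multiplicities $S(n)$ and second-largest values $Z(n)$), the same two-sided squeeze $M(n)^d S(n)\le q_d(n)\le M(n)^d S(n)+Z(n)^d q_0(n)$ coming from $k^d\le g_d(k)\le g_0(k)k^d$, and the same four-way case split $n\equiv 0,1,2\pmod 3$ plus $n=5$. One small expository slip: the threshold $-(n+1)\log_{9/8}(r)$ for $n\equiv 2\pmod 3$ is actually \emph{smaller} than the $-2n\log_{9/8}(r)$ in the other residues, not larger---the subleading combinatorial comparison is indeed more delicate, but the $9/8$ gap still enters through the second-largest products for $n\pm 1$, so the final bound ends up gentler.
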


The double sequences $\{g_d(n)\}$ given by $\psi_d(n)= n^d$ and $\sigma_d(n)= \sum_{\ell \mid n} \ell^d$ satisfy (\ref{Bedingung}). 
In the case $\psi_{0}\left( n\right) $, we have 
$\frac{t
}{
1-t
}=\sum _{n=1}^{\infty }
t^{n}$ and
$\frac{1}{1-\frac{
t}{
1-t
}}=1+\sum _{n=1}^{\infty }
2^{n-1}
t^{n}
$.
Therefore, $
q^{\psi_{0
}}\left( n\right)
\leq 2^{n}$ for $n\geq 1$.
We can apply Theorem \ref{hauptsatzneu} with
$r=\frac{1}{2}$.

 Let 
\[
D^{\psi}\left( n\right) :=\left\{
\begin{array}{ll}
2
n\log
_{9/8}\left(
2\right)
,&n\equiv 0\pmod{3},\\
2
n\log
_{9/8}\left( 2\right) +
\log
_{9/8}\left( 3\right)
-\log
_{9/8}\left(
n+2
\right)
,&n\equiv 1\pmod{3},\\
\left( n+2
\right) \log
_{9/8}\left(
2\right)
,
&n\equiv 2\pmod{3},n\neq 5,\\
\log _{9/8}\left( 512
\right)
,&n=5.
\end{array}
\right.
\]

\begin{corollary} Let $n \geq 3$. Let $d >
D^{\psi}(n)$. Then
\begin{equation}
\frac{ \left(
q^{\psi _{d
}}\left( n\right)  \right)^2                      }{
q^{\psi _{d
}}\left( n-1\right)      \,\,
q^{\psi _{d
}}\left( n+1\right) }
< 1                 \text{, if and only if }n \equiv 1 \pmod{3}.
\end{equation}
\end{corollary}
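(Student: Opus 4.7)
The plan is to derive the Corollary as a direct specialization of Theorem~\ref{hauptsatzneu} with the choice $g_d = \psi_d$. First I would check the hypotheses: since $\psi_d(n) - n^d = 0$ identically, condition (\ref{Bedingung}) holds trivially, and $\psi_d(1) = 1$. The paragraph preceding the Corollary already records the closed form coming from $\frac{t}{1-t} = \sum_{n \geq 1} t^n$, namely $q^{\psi_0}(n) = 2^{n-1}$ for $n \geq 1$; this gives the bound $q^{\psi_0}(n) \leq 2^n$ and licenses the application of Theorem~\ref{hauptsatzneu} with $r = 1/2$.

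The remaining work is purely a substitution of $r = 1/2$ into the three definitions of $D^{g}(n)$, together with the identity $\log_{9/8}(1/2) = -\log_{9/8}(2)$. For $n \equiv 0 \pmod 3$, formula (\ref{eq:0mod3}) becomes $2n \log_{9/8}(2)$. For $n \equiv 1 \pmod 3$, formula (\ref{eq:1mod3}) yields $\log_{9/8}(3) + 2n \log_{9/8}(2) - \log_{9/8}(n+2)$. For $n \equiv 2 \pmod 3$ with $n \neq 5$, the formula $\log_{9/8}(2) - (n+1)\log_{9/8}(r)$ collapses to $(n+2)\log_{9/8}(2)$. Finally, for the isolated case $n = 5$, one plugs in the explicit values $q^{\psi_0}(4) = 2^{3} = 8$ and $q^{\psi_0}(6) = 2^{5} = 32$, obtaining $D^{\psi}(5) = \log_{9/8}(2 \cdot 8 \cdot 32) = \log_{9/8}(512)$. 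In each branch, $D^g(n)$ agrees with the $D^{\psi}(n)$ stated above the Corollary, and so the conclusion of the Corollary is exactly the conclusion of Theorem~\ref{hauptsatzneu} for this choice of $g_d$.

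There is no real obstacle here: every nontrivial step (the bound on $q^{\psi_0}$, the formula for $D^g$, and the sign-behavior of the ratio) has already been handled in the theorem, and what remains is the matching of constants. The only mildly delicate point is bookkeeping the four branches for $n \pmod 3$ (including the exceptional $n = 5$) without mixing up signs when passing from $\log_{9/8}(r)$ to $\log_{9/8}(2)$, which I would simply present as a display showing the four substitutions side by side.
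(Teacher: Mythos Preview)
Your proposal is correct and follows exactly the paper's own derivation: verify that $\psi_d$ satisfies (\ref{Bedingung}) and $q^{\psi_0}(n)\le 2^n$, apply Theorem~\ref{hauptsatzneu} with $r=1/2$, and substitute into each branch of $D^g(n)$ to recover the stated $D^{\psi}(n)$. There is nothing to add or correct.
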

For $g_{0
}=\sigma _{
0}$, we obtain
$\frac{1}{1-\sum _{n=1}^{\infty }\sigma _{
0}\left( n\right)
t^{n}}=1
 + t
 + 3\*
t^2
 + 7\*
t^3
 + 18\*
t^4
 + 43\*
t^5
 + 108\*t
^6+\ldots $.
Obviously, $\sigma _{
0}\left( n\right) \leq
n
$. Then
$\sum _{n=1
}^{\infty }
nt
^{n}=\frac{
t}{\left( 1-t
\right) ^{
2}}$ and
the  radius of convergence of the series expansion of
$\frac{1}{1-\frac{
t}{\left( 1-
t\right) ^{
2}}}$ is $\left( 3-\sqrt{5}\right) /2>1/3$.
Analyzing the coefficients shows that
we can choose any
$0<r<\left( 3-\sqrt{5}\right) /2$. For
simplicity, we take $r=\frac{
1}{
3}$ and obtain $q
^{\sigma _{0
}}\left( n\right)
 \leq
3
^{n}$.
We define $D^{\sigma}(n)$ for $n \geq 3$ and $n\neq 5$ by
\begin{equation*}
\left\{
\begin{array}{ll}
2
\log _{9/8}\left(
3
\right)
n,&n\equiv 0\pmod{3},\\
\left( 2n+1\right) \log
_{9/8}\left(
3
\right)
-\log
_{9/8}\left(
n+2
\right)
,&n\equiv 1\pmod{3},\\
\log
_{9/8}\left(
2
\right) +
\left( n+1\right)
\log
_{9/8}\left(
3
\right)
,&n\equiv 2\pmod{3}.
\end{array}
\right.
\end{equation*}
Further, $D^{
\sigma }\left( 5\right) :=
\log _{9/8}\left( 3888
\right)
.$
\begin{corollary} 
Let $n \geq 3$. Let $d >
D^{\sigma}(n)$. Then
\begin{equation}
\frac{ \left(
q^{\sigma _{d
}}\left( n\right)  \right)^2                      }{
q^{\sigma _{d
}}\left( n-1\right)      \,\,
q^{\sigma _{d
}}\left( n+1\right) } 
< 1                 \text{, if and only if }n \equiv 1 \pmod{3}.
\end{equation}

\end{corollary}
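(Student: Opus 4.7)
The strategy is to deduce the corollary directly from Theorem \ref{hauptsatzneu} by specializing to $g_d=\sigma_d$ with $r=1/3$, so the proof reduces to (i) verifying the hypotheses of the theorem for the divisor-power sequence and (ii) checking that the resulting $D^g(n)$ matches the piecewise $D^{\sigma}(n)$ defined just above the corollary.

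For (i), positivity and $\sigma_d(1)=1$ are immediate. Condition (\ref{Bedingung}) follows from the identity
$$\sigma_d(n)-n^d=\sum_{\substack{\ell\mid n\\ \ell<n}}\ell^d,$$
since every proper divisor satisfies $\ell\leq n-1$, so each summand is bounded by $(n-1)^d$, and the number of proper divisors is exactly $\sigma_0(n)-1$; nonnegativity of the difference is trivial. To produce an admissible $r$, I would start from the trivial bound $\sigma_0(n)\leq n$, which gives the coefficient-wise domination
$$\sum_{n\geq 1}\sigma_0(n)\,t^n\;\leq\;\sum_{n\geq 1}n\,t^n=\frac{t}{(1-t)^2}$$
as formal power series with nonnegative coefficients. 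Expanding $(1-t/(1-t)^2)^{-1}$ as a geometric series in $t/(1-t)^2$ preserves nonnegativity and hence coefficient-wise dominates $\sum q^{\sigma_0}(n)t^n$. The dominating rational function has radius of convergence $(3-\sqrt{5})/2>1/3$, so a standard Cauchy estimate yields $q^{\sigma_0}(n)\leq 3^n$, which is exactly the choice $r=1/3$ suggested in the excerpt.

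For (ii), I would simply substitute $r=1/3$ into the three cases of Theorem \ref{hauptsatzneu}: for $n\equiv 0\pmod 3$, $-2n\log_{9/8}(1/3)=2n\log_{9/8}(3)$; for $n\equiv 1\pmod 3$, the bound becomes $\log_{9/8}(3)+2n\log_{9/8}(3)-\log_{9/8}(n+2)=(2n+1)\log_{9/8}(3)-\log_{9/8}(n+2)$; and for $n\equiv 2\pmod 3$ with $n\neq 5$, it becomes $\log_{9/8}(2)+(n+1)\log_{9/8}(3)$. These are precisely the three branches of $D^{\sigma}(n)$. The outlier $n=5$ is handled by reading $q^{\sigma_0}(4)=18$ and $q^{\sigma_0}(6)=108$ off the expansion $1+t+3t^2+7t^3+18t^4+43t^5+108t^6+\cdots$ printed just before the corollary, which yields $D^{\sigma}(5)=\log_{9/8}(2\cdot 18\cdot 108)=\log_{9/8}(3888)$, again matching the stated definition. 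With the hypotheses verified and the numerical bound identified, Theorem \ref{hauptsatzneu} applied to $g_d=\sigma_d$ delivers the corollary. The only mildly subtle step is the passage from $\sigma_0(n)\leq n$ to $q^{\sigma_0}(n)\leq 3^n$; everything else is formal bookkeeping.
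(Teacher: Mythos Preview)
Your proposal is correct and follows exactly the paper's own derivation: verify condition~(\ref{Bedingung}) for $\sigma_d$, bound $q^{\sigma_0}(n)$ by $3^n$ via the domination $\sigma_0(n)\leq n$ and the generating function $(1-t/(1-t)^2)^{-1}$, then substitute $r=1/3$ into the bounds of Theorem~\ref{hauptsatzneu} and read off $D^\sigma(5)=\log_{9/8}(2\cdot 18\cdot 108)$. One small imprecision worth tightening: a Cauchy estimate on $|t|=1/3$ only gives $q^{\sigma_0}(n)\leq M\cdot 3^n$ with $M>1$; the clean bound $q^{\sigma_0}(n)\leq 3^n$ follows instead from your coefficient-wise domination $q^{\sigma_0}(n)\leq c_n$ together with the recursion $c_n=3c_{n-1}-c_{n-2}<3c_{n-1}$ (all $c_n>0$), which yields $c_n\leq 3^n$ by induction.
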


\section{Proof of Theorem \ref{hauptsatzneu}}
Let $g_{d}$ be fixed satisfying (\ref{Bedingung}).
To simplify notation, we put $
q_{d}\left( n\right) =
q^{g_{d
}}\left( n\right) $. We have
\[
\frac{1}{1-\sum _{n=1}^{\infty } g_{d}\left( n\right) \,
t^{n}}=1+\sum _{n=1
}^{\infty }\,
\sum _{k\leq n}\sum _{\substack{m_{1},\ldots ,m_{k}\geq 1 \\ m_{1}+\ldots +m_{k}=n}}
\!\!\!g_{d}\left( m_{1}\right) \cdots g_{d}\left( m_{k}\right)
t^{n}=\sum _{n=0}^{\infty }
q_{d}\left( n\right)
t^{n}
.
\]
Therefore,
\begin{equation*}
q_{d}\left( n\right) =\sum _{k\leq n}\sum _{\substack{m_{1},\ldots ,m_{k}\geq 1 \\ m_{1}+\ldots +m_{k}=n}}
g_{d}\left( m_{1}\right) \cdots g_{d}\left( m_{k}\right)
\label{eq:p}
\end{equation*}
for $n\geq 1$. 
\subsection{Two Lemmata}

It is known \cite{HN22a} that:
\begin{lemma}
Let $n\geq 2$. Then
\[
\max _{\substack{m_{1},\ldots ,m_{k}\geq 1 \\ m_{1}+\ldots +m_{k}=n}}m_{1}\cdots m_{k}=\left\{
\begin{array}{ll}
3^{n/3},&n\equiv 0\pmod{3},\\
4\cdot 3^{\left( n-4\right) /3},&n\equiv 1\pmod{3},\\
2\cdot 3^{\left( n-2\right) /3},&n\equiv 2\pmod{3}.
\end{array}
\right.
\]
For $n\geq 6$,
$n
\nequiv 2\pmod{3}$, the second largest products are
\begin{eqnarray*}
8\cdot 3^{\left( n
-6\right)
/3},n&\equiv &0\pmod{3},\\
10\cdot 3^{\left( n-7\right) /3},n&\equiv &1\pmod{3}
\end{eqnarray*}
and $3$ for $n=4$.
\end{lemma}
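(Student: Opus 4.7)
The plan is to use the classical reduction argument for this kind of product-maximization, followed by a careful perturbation analysis for the second-largest case.

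For the first part, I would show by a sequence of local moves that any maximizing composition $(m_1,\ldots,m_k)$ of $n$ may be assumed to have all parts in $\{2,3\}$. Namely: (i) if some $m_i=1$ and $k\geq 2$, merging it with another part $m_j$ strictly increases the product, since $m_j+1 > 1\cdot m_j$; (ii) if some $m_i\geq 5$, splitting $m_i=2+(m_i-2)$ strictly increases the product, since $2(m_i-2)>m_i$; (iii) a part equal to $4$ may be rewritten as $2+2$ without changing the product; and (iv) any block $2+2+2$ may be replaced by $3+3$, raising the product from $8$ to $9$, so a maximizer contains at most two $2$'s. The three residues $n\equiv 0,1,2\pmod 3$ then force the unique composition (up to permutation and the harmless $4\leftrightarrow 2+2$ swap) consisting respectively of zero, two, or one $2$'s with the remaining mass as $3$'s, yielding the stated maxima $3^{n/3}$, $4\cdot 3^{(n-4)/3}$, and $2\cdot 3^{(n-2)/3}$.

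For the second-largest product (under $n\geq 6$ with $n\not\equiv 2\pmod 3$), I would argue that any non-maximizing composition arises from an extremal one by inverses of the above moves: reintroducing a $1$, fusing a $3+3$ block into a $6$ or a $3+2$ block into a $5$, splitting $3+3$ into $2+2+2$, etc.\ Each such move has a computable multiplicative effect on the product, and I would identify the one with the smallest loss. For $n\equiv 0\pmod 3$, the $3+3\mapsto 2+2+2$ swap multiplies the product by $8/9$, giving $8\cdot 3^{(n-6)/3}$. For $n\equiv 1\pmod 3$, fusing a $2$ and a $3$ into a single $5$ multiplies by $5/6$, yielding $10\cdot 3^{(n-7)/3}$. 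The base cases $n=4$ and $n=7$ are checked by direct enumeration.

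The main obstacle will be certifying that the identified deformation really yields the second-largest product, since in principle competing perturbations (multiple simultaneous swaps, or compositions containing a part $\geq 5$) could be in play. I expect to handle this by normalising every candidate composition's product to the form (integer constant)$\,\cdot\, 3^{(n-c)/3}$ and comparing constants within each residue class modulo $3$, together with direct verification in a small window of $n$ before the asymptotic ordering stabilises.
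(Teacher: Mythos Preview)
The paper does not prove this lemma; it quotes it as known from \cite{HN22a}, so there is no in-paper argument to compare against. Your outline for the maximum is the standard reduction and is correct.

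For the second-largest products your perturbation approach is sound in principle, and the case $n\equiv 0\pmod 3$ as well as the base case $n=4$ go through as you describe. In the case $n\equiv 1\pmod 3$, however, you have overlooked a competing move. From the maximizer $2+2+3+\cdots+3$ one can, for $n\geq 10$, replace a $3+3$ block by $2+2+2$; this multiplies the product by $8/9$, whereas your fusion $2+3\mapsto 5$ multiplies only by $5/6$. Since $8/9>5/6$, the resulting product $32\cdot 3^{(n-10)/3}$ strictly exceeds the claimed $10\cdot 3^{(n-7)/3}=30\cdot 3^{(n-10)/3}$; e.g.\ for $n=10$ the maximum is $36$ while $(2,2,2,2,2)$ or $(4,4,2)$ yield $32>30$. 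Your certification step would therefore fail here, and in fact the second-largest value asserted by the lemma appears to be incorrect for $n\equiv 1\pmod 3$ with $n\geq 10$ (it is right at $n=7$, where after the two mandatory $2$'s no $3+3$ block is available). This does not disturb Theorem~\ref{hauptsatzneu}, since the refined upper bound for $n\equiv 1\pmod 3$ in Lemma~\ref{abschaetzungenallgemein} is never invoked in its proof.
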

Further, we provide an extension of a result
from \cite{HN22a,HN22b}. 
\begin{lemma}
\label{abschaetzungenallgemein}For
$n\geq 2$
\begin{eqnarray*}
3^{dn/3}&\leq &q
_{d}\left( n\right) \leq 3^{
d
n/3}
q_{0
}\left( n\right) ,n\equiv 0\pmod{3},\\
\left( 4\cdot 3^{\left( n-4\right) /3}\right) ^{d}
\frac{\left( n-1\right) \left( n+8\right) }{
18}
&\leq &
q_{d
}\left( n\right) \leq \left( 4\cdot 3^{\left( n-4\right) /3}\right) ^{d
}
q_{0
}\left( n\right) ,n\equiv 1\pmod{3},\\
\left( 2\cdot 3^{\left( n-2\right) /3}\right) ^{d}\frac{n+1}{3}&\leq &q
_{d}\left( n\right) \leq \left( 2\cdot 3^{\left( n-2\right) /3}\right) ^{d
}
q_{0
}\left( n\right) ,n\equiv 2\pmod{3}.
\end{eqnarray*}
Additionally, for $n\geq 6$, $n
\nequiv 2\pmod{3}$, we have
\begin{eqnarray*}
q_{d}\left( n\right) &\leq &3^{nd/3}+\left( 8\cdot 3^{n/3-2}\right) ^{d
}
q_{0
}\left( n\right) ,n\equiv 0\pmod{3},\\
q_{d}\left( n\right) &\leq &\left( 4\cdot 3^{\left( n-4\right) /3}\right) ^{d}
\frac{\left( n-1\right) \left( n+8\right) }{
18}+
\left( 10\cdot 3^{\left( n-7\right) /3}\right) ^{d
}
q_{0
}\left( n\right) ,n\equiv 1\pmod{3},
\end{eqnarray*}
and $q_{d}\left( 4\right)
\leq
2\cdot 4^{d}+3^{d}
q_{0
}\left( 4\right) $.
\end{lemma}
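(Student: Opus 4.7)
The plan is to expand $q_{d}(n)$ via the convolution formula stated at the start of Section~2,
\[
q_{d}(n)=\sum_{k=1}^{n}\sum_{\substack{m_{1}+\cdots+m_{k}=n\\ m_{i}\geq 1}} g_{d}(m_{1})\cdots g_{d}(m_{k}),
\]
and then to split the sum according to the value of $m_{1}\cdots m_{k}$, which is pinned down by Lemma~2.1. Condition~(\ref{Bedingung}) supplies the two pointwise estimates that will do most of the work: directly $g_{d}(m)\geq m^{d}$, and since $(m-1)^{d}\leq m^{d}$ also $g_{d}(m)\leq m^{d}\,g_{0}(m)$.

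For the \emph{lower bounds}, I use $g_{d}(m_{i})\geq m_{i}^{d}$ and retain only those compositions that attain the maximum product $M$ from Lemma~2.1; each contributes exactly $M^{d}$. A direct count reproduces the three stated expressions: a single composition $(3,\ldots,3)$ when $n\equiv 0\pmod{3}$; the $(n+1)/3$ placements of the unique ``$2$'' among the ``$3$''s when $n\equiv 2\pmod{3}$; and, when $n\equiv 1\pmod{3}$, the $(n-1)/3$ placements of a single ``$4$'' together with the $\binom{(n+2)/3}{2}$ arrangements of two ``$2$''s among the ``$3$''s, summing to $(n-1)(n+8)/18$.

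For the \emph{first upper bounds}, inserting $g_{d}(m_{i})\leq m_{i}^{d}g_{0}(m_{i})$ and $(m_{1}\cdots m_{k})^{d}\leq M^{d}$ into the convolution gives at once
\[
q_{d}(n)\leq M^{d}\sum g_{0}(m_{1})\cdots g_{0}(m_{k})=M^{d}\,q_{0}(n)
\]
in all three residue classes.

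For the \emph{sharper upper bounds} and the $n=4$ estimate, the plan is to split the convolution further, separating the compositions that attain $M$ from those whose product is at most the second maximum $M'$ from Lemma~2.1; the latter contribute at most $(M')^{d}q_{0}(n)$ by the same argument as above. The contribution of the distinguished compositions---$g_{d}(3)^{n/3}$ when $n\equiv 0\pmod{3}$, and mixed products of $g_{d}(4),g_{d}(3),g_{d}(2)$ when $n\equiv 1\pmod{3}$---is then controlled via the sharper pointwise inequality $g_{d}(m)\leq m^{d}+(g_{0}(m)-1)(m-1)^{d}$ followed by a binomial expansion, with the excess over the ``pure'' $M^{d}$ term absorbed into the $(M')^{d}q_{0}(n)$ bound. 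I expect this absorption step to be the main technical obstacle, since it amounts to comparing $(M/M')^{d}$ against a ratio of $g_{0}$-factors; it is also precisely what forces the ad-hoc treatment of $n=4$, where the second-maximum value $3$ of Lemma~2.1 is not given by the general formula.
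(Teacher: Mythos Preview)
Your plan is exactly the paper's proof: lower bounds via $g_d(m)\geq m^d$ and the count $S(n)$ of maximal compositions, first upper bounds via $g_d(m)\leq g_0(m)m^d$, and refined upper bounds by separating the $S(n)$ maximal compositions from the rest and expanding $\prod_i\bigl(m_i^d+(g_0(m_i)-1)(m_i-1)^d\bigr)$. The absorption you anticipate as the main obstacle is in fact a $d$-free arithmetic check, not a comparison of $(M/M')^d$ against $g_0$-ratios: each cross term in the expansion has base $\prod_{i\in S}(m_i-1)\prod_{i\notin S}m_i$, and lowering any part of a maximal product by one already drops it to at most $Z(n)$ (e.g.\ $2\cdot 3^{n/3-1}<8\cdot 3^{n/3-2}$ when $n\equiv 0$, and similarly in the other cases), while the attached coefficients $\prod_{i\in S}(g_0(m_i)-1)$ sum over $S$ to at most $g_0(m_1)\cdots g_0(m_k)$ and hence feed back into $q_0(n)$.
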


\begin{proof}
Since
$
m_{1}\cdots m_{k}
\leq \max _{\substack{m_{1},\ldots ,m_{k}\geq 1 \\ m_{1}+\ldots +m_{k}=n}}m_{1}\cdots m_{k}$,
the upper bounds should be obvious as
$g_{d}\left( n\right) \leq n^{d}+\left( g_{
0}\left( n\right) -1
\right) \left( n-1\right) ^{d
}\leq g_{0
}\left( n\right) n^{d
}$.
For the lower
bounds, we obtain
\[
\sum _{k\leq n}\sum _{\substack{m_{1},\ldots ,m_{k}\geq 1 \\ m_{1}+\ldots +m_{k}=n}}\left( m_{1}\cdots m_{k}\right) ^{d}\geq S\left( n\right) \max _{k\leq n}\max _{\substack{m_{1},\ldots ,m_{k}\geq 1 \\ m_{1}+\ldots +m_{k}=n}}\left( m_{1}\cdots m_{k}\right) ^{d}
\]
where $S\left( n\right) $ is the number of
$m_{1}+\ldots +m_{k}=n$, which yield the maximal
product. Therefore,
\[
S\left( n\right) =\left\{
\begin{array}{ll}
1,&n\equiv 0\pmod{3},\\
\frac{n-1}{3}+\binom{\left( n+2\right) /3}{2}=\frac{\left( n-1\right) \left( n+8\right) }{18},&n\equiv 1\pmod{3},\\
\frac{n+1}{3
},&n\equiv 2\pmod{3}.
\end{array}
\right.
\]

For the refined upper bounds, we consider
$g_{d}\left( n\right) \leq n^{d}+\left( g_{
0}\left( n\right) -1
\right) \left( n-1\right) ^{d
}$.
Then
$g_{d}\left( m_{1}\right) \cdots g_{d}\left( m_{k}\right) \leq g_{
0}\left( m_{1}\right) \cdots g_{0
}\left( m_{k}\right) \left( m_{1}\cdots m_{k}\right) ^{d
}$
and for the maximal values
\begin{eqnarray*}
&&g_{d}\left( m_{1}\right) \cdots g_{d}\left( m_{k}\right) \\
&\leq &\left( m_{1}^{d}+\left( g_{
0}\left( m_{1}\right) -
1
\right) \left( m_{1}-1\right) ^{d
}\right) \cdots \left( m_{k}^{d}+\left( g_{
0}\left( m_{k}\right) -1
\right) \left( m_{k}-1\right) ^{d
}\right) .
\end{eqnarray*}
Therefore,
\[
q_{d}\left( n\right)
\leq \left( \max _{k\leq n}\max _{\substack{m_{1},\ldots ,m_{k}\geq 1 \\ m_{1}+\ldots +m_{k}=n}}m_{1}\cdots m_{k}\right) ^{d}S\left( n\right) +\left( Z\left( n\right) \right) ^{d
}
q_{0
}\left( n\right) 
\]
where $Z\left( n\right) $ is the second largest product $m_{1}\cdots m_{k}$ of
all $m_{1}+\ldots +m_{k}=n$.
\end{proof}

\subsection{Proof of Theorem \ref{hauptsatzneu}}
We consider the cases
$n\equiv 0,1,2 \pmod{3}$ and $n=5$ separately.

\subsubsection{Let $n\equiv 0\pmod{3}$}
Then
\begin{eqnarray*}
\frac{\left( q
_{d}\left( n\right) \right) ^{2}}{
q_{d}\left( n-1\right) q
_{d}\left( n+1\right) }&\geq &\frac{3^{2dn/3}}{\left( 2\cdot 3^{\left( n-3\right) /3}\cdot 4\cdot 3^{\left( n-3\right) /3}\right) ^{d
}
q_{0
}\left( n-1\right) q
_{
0}\left( n+1\right) }\\
&\geq &\left( \frac{9}{8}\right) ^{d
}\frac{
1}{
r^{-2n}}\geq 1
\end{eqnarray*}
for
$d\geq
-
2
\log
_{9/8}\left( r\right)
n$.
\subsubsection{Let $n\equiv 1\pmod{3}$}
Then
\begin{eqnarray*}
\frac{\left( q
_{d}\left( n\right) \right) ^{2}}{
q_{d}\left( n-1\right) q
_{d}\left( n+1\right) }&\leq &\frac{\left( \left( 4\cdot 3^{\left( n-4\right) /3}\right) ^{d
}q
_{0
}\left( n\right) \right) ^{2}}{3^{\left( n-1\right) d/3}\cdot \left( 2\cdot 3^{\left( n-1\right) /3}\right) ^{d}\frac{n+2}{3}}\\
&\leq &\left( \frac{8}{9}\right) ^{d
}\frac{3r^{-2n}}{
n+2
}<1
\end{eqnarray*}
for
$d>
\log
_{9/8}\left( 3\right) -2n\log
_{9/8}\left( r\right) -\log
_{9/8}\left(
n+2
\right)
$.
\subsubsection{Let $n\equiv 2\pmod{3}$ and $n\neq 5$}
Then
\begin{equation} \label{eq:logkonkav2mod3}
\frac{\left( q
_{d}\left( n\right) \right) ^{2}}{
q_{d}\left( n-1\right) q
_{d}\left( n+1\right) } \geq 
\frac{\left( \left( 2\cdot 3^{\left( n-2\right) /3}\right) ^{d} \frac{n+1}{3}\right) ^{2}}{ A\left(
d,n\right) \, \, B\left( 
d,n\right) },
\end{equation}
where
\begin{eqnarray*}
A\left(
d,n
\right) & = &   \left( 4\cdot 3^{\left( n-5\right) /3}\right) ^{d}
\frac{\left( n-2\right) \left( n+7\right) }{
18}+
\left( 10\cdot 3^{\left( n-8\right) /3}\right) ^{d
}
q_{0
}\left( n-1\right) ,                    \\
B\left(
d,n
\right) & =&    3^{\left( n+1\right) d/3}+\left( 8\cdot 3^{\left( n-5\right) /3}\right) ^{d
}
q_{0
}\left( n+1\right)                       .
\end{eqnarray*}
Then
as a lower bound
for the
expression on the right hand side of (\ref{eq:logkonkav2mod3})
we obtain the following:
\begin{eqnarray*}
&
&\frac{\left( \frac{n+1
}{3}\right) ^{2}}{\left( \frac{\left( n-2\right) \left( n+7\right) }{
18}+
\left( \frac{5}{6}\right) ^{d
}r^{1-n}
\right) \left( 1+\left( \frac{8}{9}\right) ^{d
}r^{-n-1}
\right) }\\
&\geq &\frac{\left(
\frac{n+1}{3}\right) ^{2}}{\left( \frac{\left( n-2\right) \left( n+7\right) }{
18}+
\frac{4}{9}\right) \frac{3}{2}}\geq \frac{
4\left( n+1\right) ^{2}}{
3\left(
n+6
\right) \left( n
-1
\right) }=1+\frac{\left( n-\frac{7}{2}\right) ^{2}+\frac{39}{4}}{3\left( n+6\right) \left( n-1\right) }
\geq 1
\end{eqnarray*}
for
\begin{eqnarray*}
d&\geq &
\max \left\{
\log _{6/5}\left( 9/
4\right)
-\left( n-1\right) \log
_{6/5}\left( r\right)
,
\log _{9/8}\left( 2\right) -
\left( n+1\right)
\log
_{9/8}\left( r\right)
\right\} \\
&=&
\log _{9/8}\left(
2\right)
-\left( n+
1\right) \log
_{9/8}\left( r\right)
\end{eqnarray*}
for $0<r
\leq 1
$ as
$0\leq -
\log _{6/5}\left( r\right)
\leq -
\log _{9/8}\left( r\right)
$
and
$
\log _{6/5}\left( 9/4\right) 
<
\log _{9/8}\left( 2\right)
$.

\subsubsection{Let $n=5$}

We have
\begin{eqnarray*}
\frac{\left( q
_{d}\left( 5\right) \right) ^{2}}{
q_{d}\left(
4\right) q
_{d}\left( 6
\right) }&\geq
&\frac{
4\cdot 3
6^{d}
}{\left( 2\cdot 4^{d}+3^{d
}q
_{0
}\left( 4\right) \right) \left( 9^{d}+8^{d
}
q_{0
}\left( 6\right) \right) }\\
&\geq &\frac{4\cdot 36^{d}}{2\cdot 36^{d}+
4\cdot 32
^{d
}q
_{0
}\left( 4\right) q
_{0
}\left( 6\right) }\geq 1
\end{eqnarray*}
for
$d\geq
\log _{9/8}\left(
2q_{0
}\left( 4\right)
q_{
0}\left( 6\right)
\right)
$.

\section{Final Remarks}
Let us
examine $\left\{ q
^{g_{d}}\left( n\right) \right\} $. There are no exceptions for $g_d=\sigma _{d}$
or $\psi_d$
for $n=2$, since
\begin{equation*}
\left( q
^{g_{d}}\left( 2\right) \right)^2 - q
^{g_{d}}\left( 1\right) \, q
^{g_{d}}\left( 3\right) = \left( g_{d}\left( 2\right) \right) ^2 - g_d(3).
\end{equation*}

\begin{Challenge} We consider the
exponential case for $d=0$ and $\sigma_0$. We expect $E
^{p^{\sigma _{0}}}$ to be finite. Moreover,
numerical experiments (tested up to $n=25
00$) suggest that 
\begin{equation*}
E^{
p^{\sigma _{0}}}=\left\{ 2k+1\middle| 0\leq k\leq 371\right\} \setminus \left\{ 717,723,729,735,741\right\} .
\end{equation*}
\end{Challenge}

\begin{Challenge}
We consider the geometric case. We have $E^{
q^{\psi _{0}}}= E^{
q^{\psi _{1}}}=\left\{ 1\right\} $, since
$q^{\psi_0}(n) = 2^{n-1}$ and $q^{\psi_1}(n)=F_{2n}$. 
For $E^{
q^{\psi _{2}}}$, we expect infinitely many exceptions and non-exceptions.
\end{Challenge}

\begin{Challenge}[Geometric case]
Let $\sigma_d$ for $0\leq d \leq 4$ be given. Then all the odd numbers $n$ up to
$
10^{4}
$ are exceptions. Note that for $d=5$
some even numbers also appear as exceptions. For example,
\begin{equation*}
\left( q^{\sigma_5}(10) \right)^2 - q^{\sigma _{5}}\left( 9\right) \,q^{\sigma_5}(11) <0.
\end{equation*}
Nevertheless, it seems that the set of
exceptions for each $\sigma_d$ is infinite.
\end{Challenge}
%
%{\bf Acknowledgments.}

%%%%%%%%%%%%%%%%%%%%%%
%%%%%%%%%%%%%%%%%%%%%%
\end{document}